\documentclass[11pt]{amsart}
\usepackage{geometry}                % See geometry.pdf to learn the layout options. There are lots.
\geometry{letterpaper}                   % ... or a4paper or a5paper or ... 
\usepackage{graphicx}
\usepackage{amssymb}
\usepackage{epstopdf}
\DeclareGraphicsRule{.tif}{png}{.png}{`convert #1 `dirname #1`/`basename #1 .tif`.png}
\usepackage{amsmath,amsthm}
\usepackage{amscd}
\usepackage{amsfonts, amssymb} 
\usepackage{latexsym}

\newtheorem{theorem}{Theorem}%[section]
\newtheorem{corollary}[theorem]{Corollary}
\newtheorem{lemma}{Lemma}

\theoremstyle{definition}

\newcommand \ad{\text{ad}\thinspace}

\newcommand\la{\langle}
\newcommand\ra{\rangle}
\renewcommand{\frak}{\mathfrak}

\title{A generalization of Lazard's elimination theorem}
\author{Elizabeth Jurisich  and Robert Wilson }

\address{Department of Mathematics,  College of Charleston,
Charleston SC 29424, jurisiche@cofc.edu}

\address{Department of Mathematics Rutgers
University, 110 Frelinghuysen Road, Piscataway, NJ 08854, rwilson@math.rutgers.edu }
%\date{}                                           % Activate to display a given date or no date

\begin{document}

\maketitle
%\section{}
%\subsection{}

\begin{abstract}Using the classical Lazard's elimination theorem,
we obtain a decomposition theorem for Lie algebras defined by
generators and relations of a certain type. This is a preprint version of the paper appearing in Communications in Algebra Volume 32, Issue 10, 2004. 
\end{abstract}

\section{Introduction}

This paper grew out of, and has its main application in, the
theory of generalized Kac-Moody Lie algebras over a field $\Phi$. Borcherds 
initiated the study of these algebras in \cite{2}, and applied the
theory in his proof of the Conway-Norton conjectures \cite{3}. 
Generalized Kac-Moody Lie algebras may be defined by generators and
relations (see \cite{7}).  
While studying these algebras it is natural to consider an algebra of the
form $$L(V \oplus W)/I$$
where $L(X)$ denotes the free Lie algebra on the vector space $X$, $V$
and $W$ are vector spaces with $V \cap W = (0)$, $L(V \oplus W)$ is
graded by giving nonzero elements of $V$ degree zero and nonzero
elements of $W$ degree
one, and where $I$ is an ideal generated by homogeneous elements of
degree zero or one. Theorem 1 gives the structure of
such an algebra.

We write $U(\frak r)$ for the universal enveloping algebra of the Lie 
algebra $\frak r$. The algebra $U(\frak r)$ acts on $\frak r$ via the
adjoint action; $ a \cdot b$ denotes the image of $b$ in $\frak r$
under the action of $a \in U(\frak r)$.

Our main result, Theorem 1, generalizes and is proved using the
following theorem of 
Lazard \cite{10}, \cite[Proposition 10]{4}. 
Let $M = U(L(V))\cdot W \subset L(V \oplus W)$.
Note that $M$ is a $U(L(V))$-module and so is an $L(V)$-module.

\begin{theorem}[Lazard's Elimination Theorem]
The ideal of $L(V \oplus W) $ generated by $W$ is isomorphic to $L(M)$
and therefore $L(V \oplus W) \cong L(V) \ltimes L(M)$. \end{theorem}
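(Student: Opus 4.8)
The plan is to exhibit mutually inverse Lie algebra homomorphisms between $L(V\oplus W)$ and $L(V)\ltimes L(M)$. Here the semidirect product uses the action of $L(V)$ on $L(M)$ obtained by extending the given $L(V)$-module structure on $M$ to all of $L(M)$ by derivations; that a representation of a Lie algebra on a vector space $U$ extends uniquely to a homomorphism into $\mathrm{Der}(L(U))$ is the standard Leibniz-rule argument, so $L(V)\ltimes L(M)$ is a well-defined Lie algebra. First I would use the freeness of $L(V\oplus W)$ to define $\psi\colon L(V\oplus W)\to L(V)\ltimes L(M)$ by $v\mapsto (v,0)$ for $v\in V$ and $w\mapsto (0,w)$ for $w\in W$; this is legitimate because $W = 1\cdot W\subseteq U(L(V))\cdot W = M\subseteq L(M)$. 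Going the other way, I would build $\chi\colon L(V)\ltimes L(M)\to L(V\oplus W)$ from the homomorphism $\alpha\colon L(V)\to L(V\oplus W)$ induced by $V\hookrightarrow V\oplus W$ together with the homomorphism $\beta\colon L(M)\to L(V\oplus W)$ induced, via freeness of $L(M)$, by the inclusion $M\subseteq L(V\oplus W)$.

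A homomorphism out of $L(V)\ltimes L(M)$ is determined by such a pair $(\alpha,\beta)$ as soon as they are compatible with the action, i.e.\ $\beta(x\cdot u)=[\alpha(x),\beta(u)]$ for all $x\in L(V)$ and $u\in L(M)$. Both sides, as functions of $u$, satisfy the same rule over the bracket of $L(M)$ --- namely $\phi([u_1,u_2]) = [\phi(u_1),\beta(u_2)]+[\beta(u_1),\phi(u_2)]$, the left-hand side because $x\cdot(-)$ is a derivation of $L(M)$ and $\beta$ a homomorphism, the right-hand side by the Jacobi identity in $L(V\oplus W)$ --- so it suffices to check the identity on the generators $u\in M$. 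There it reads $\beta(x\cdot m)=[\alpha(x),\beta(m)]$, which holds because, by definition, the $L(V)$-module structure on $M$ is the restriction of the adjoint action of $L(V)$ inside $L(V\oplus W)$, while $\alpha$ and $\beta$ are the inclusions on $L(V)$ and on $M$. This verification --- reducing the compatibility to $M$ and recognizing that it is then forced by the very definition of $M$ --- is the one step I expect to require genuine care; everything else is formal.

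Finally I would check that $\chi$ and $\psi$ are inverse by evaluating the composites on generators. For $\chi\psi$: $v\mapsto (v,0)\mapsto v$ and $w\mapsto (0,w)\mapsto\beta(w)=w$. For $\psi\chi$, note that $L(V)\ltimes L(M)$ is generated as a Lie algebra by the elements $(v,0)$, $v\in V$, and $(0,m)$, $m\in M$; the former are immediate, and for the latter I would use that $M$ is spanned by elements $(\ad x_1)\cdots(\ad x_k)(w)$ with $x_i\in V$, $w\in W$, together with the semidirect-product identity $[(x,0),(0,u)]=(0,x\cdot u)$ and the fact that $x\cdot u\in M$ whenever $u\in M$, to compute $\psi\bigl((\ad x_1)\cdots(\ad x_k)(w)\bigr)=\bigl(0,(\ad x_1)\cdots(\ad x_k)(w)\bigr)$, so that $\psi\chi(0,m)=(0,m)$. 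Hence $\psi$ and $\chi$ are mutually inverse isomorphisms and $L(V\oplus W)\cong L(V)\ltimes L(M)$. Since $0\ltimes L(M)$ is exactly the ideal of $L(V)\ltimes L(M)$ generated by $W$ --- it is an ideal, and the identity $[(x,0),(0,u)]=(0,x\cdot u)$ shows that all of $M$, hence all of $L(M)$, is reached from $W$ --- the isomorphism $\chi$ carries $L(M)$ isomorphically onto the ideal of $L(V\oplus W)$ generated by $W$, which completes the proof.
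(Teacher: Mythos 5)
Your argument is correct, but it cannot be compared step-by-step with the paper's, because the paper does not prove this statement at all: it quotes Lazard's Elimination Theorem from the literature (Lazard, Bourbaki Prop.~10, with the remark that Bourbaki's proof covers only finite-dimensional $V$ and $W$, and that Block--Leroux establish the general case), and then uses it as input to prove Theorem~1. What you have written is a self-contained direct proof by universal properties: you build the semidirect product $L(V)\ltimes L(M)$ from the derivation extension of the $L(V)$-action on $M$, define $\psi$ on the free generators $V\oplus W$, define $\chi$ from the pair $(\alpha,\beta)$ after checking the compatibility $\beta(x\cdot u)=[\alpha(x),\beta(u)]$ (correctly reduced to the generators $M$, where it holds because the module structure on $M$ \emph{is} the adjoint action inside $L(V\oplus W)$), verify the two composites on generators, and then identify $0\ltimes L(M)$ with the ideal generated by the image of $W$. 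All of these steps check out, including the spanning of $M$ by elements $(\adj x_1)\cdots(\adj x_k)w$ used in the computation of $\psi\chi$ on $(0,m)$. A point worth noting in your favor: since your argument uses only the universal properties of free Lie algebras and of derivation extensions, it imposes no finiteness hypotheses on $V$ or $W$, so it directly yields the generality that the paper obtains only by appealing to Block and Leroux; in that sense your proof buys more than the cited classical reference, at the cost of re-proving a known theorem rather than invoking it.
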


R. Block has pointed out to us that this theorem is proven in
\cite{4} only for finite dimensional $V$ and $W$.
Work of Block and Leroux \cite{1} shows that the theorem holds in general. 

Several generalizations of Lazard's elimination theorem
 are known: \cite{1} (giving a
general category theoretic result which in the special case of a free
Lie algebra gives Lazard's theorem),  \cite{6} (treating certain 
generalized Kac-Moody Lie algebras), \cite{7}
(treating all generalized Kac-Moody Lie algebras), \cite{11} (treating the
case in which $I$ is generated by elements of degree zero).  All of
these results on Lie algebras are contained in Theorem 1. In the
special case pertinent to the Conway-Norton conjectures this theorem
yields the decomposition appearing in \cite{6}. This decomposition
simplifies part of the proof of the Conway-Norton 
conjectures \cite{6, 9}. \cite{5}, which treats the case in
which $I$ is generated by a collection, $\theta$, of elements of the 
form $[r,s]$ where $r,s \in$ a basis for $V \oplus W$, follows from Theorem 1 
only for certain $\theta$ (those in which, for 
every pair $r,s$, we have $r,s \in V \bigcup W$ and at least one of $r,s$ is in $V$).

Section 2 contains the statement and proof of Theorem 1.  Section 3
discusses applications to generalized Kac-Moody Lie algebras.

\section{Main Result}

If $\frak r$ is a Lie algebra and $ S \subset \frak r$ we let 
$\la  S \ra_\frak r$ denote
the ideal of $\frak r$ generated by $S$. Of course 
$\la  S \ra_\frak r = U(\frak r) \cdot S$.

\begin{lemma} 
If $U \supseteq W$ then $L(U)/{\la W \ra}_{L(U)} \cong L(U /W)$.
\end{lemma}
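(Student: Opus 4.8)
The plan is to produce mutually inverse Lie algebra homomorphisms between $L(U)/\langle W\rangle_{L(U)}$ and $L(U/W)$, using throughout the universal property of the free Lie algebra: any linear map from a vector space $X$ to a Lie algebra $\frak r$ extends uniquely to a Lie algebra homomorphism $L(X)\to\frak r$.

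First I would build a homomorphism $\alpha\colon L(U)/\langle W\rangle_{L(U)}\to L(U/W)$. Let $\pi\colon U\to U/W$ be the canonical projection and compose with the inclusion $U/W\hookrightarrow L(U/W)$; by the universal property this linear map extends to a Lie algebra homomorphism $\tilde\alpha\colon L(U)\to L(U/W)$. Since $\tilde\alpha(W)=\pi(W)=(0)$ and the kernel of a Lie algebra homomorphism is an ideal, $\ker\tilde\alpha$ contains $\langle W\rangle_{L(U)}$, so $\tilde\alpha$ factors through a homomorphism $\alpha$ on the quotient. Next I would build $\beta\colon L(U/W)\to L(U)/\langle W\rangle_{L(U)}$ in the other direction. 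The composite $U\hookrightarrow L(U)\to L(U)/\langle W\rangle_{L(U)}$ is linear and annihilates $W$, so it factors through a linear map $\bar\jmath\colon U/W\to L(U)/\langle W\rangle_{L(U)}$; applying the universal property of $L(U/W)$ to $\bar\jmath$ yields the Lie algebra homomorphism $\beta$.

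Finally I would check that $\alpha$ and $\beta$ are mutually inverse. The composite $\alpha\circ\beta\colon L(U/W)\to L(U/W)$ restricts to the identity on the generating subspace $U/W$, so by the uniqueness clause of the universal property $\alpha\circ\beta=\mathrm{id}_{L(U/W)}$. The composite $\beta\circ\alpha\colon L(U)/\langle W\rangle_{L(U)}\to L(U)/\langle W\rangle_{L(U)}$ sends the class of $u\in U$ (through $\pi(u)$) back to the class of $u$, hence is the identity on the image of $U$; since $L(U)$ is generated as a Lie algebra by $U$, that image generates $L(U)/\langle W\rangle_{L(U)}$, and a Lie algebra endomorphism fixing a set of generators is the identity. (Equivalently, one can phrase all of this as the statement that the functor $L$, being left adjoint to the forgetful functor to vector spaces, preserves cokernels, together with the observation that $\langle W\rangle_{L(U)}$ is the ideal generated by the image of $L(W)\to L(U)$, so that $L(U)/\langle W\rangle_{L(U)}$ is the cokernel of $L(W)\to L(U)$ in Lie algebras.)

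I do not anticipate a genuine obstacle; the only point demanding care is keeping the two invocations of the universal property straight — one for $L(U)$ and one for $L(U/W)$ — and verifying that $\beta\circ\alpha$ really is the identity on a \emph{generating} subspace of $L(U)/\langle W\rangle_{L(U)}$, not merely on some proper subspace, so that the uniqueness argument applies.
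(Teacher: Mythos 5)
Your proof is correct, but it takes a different route from the paper. The paper's argument is a two-line computation riding on Lazard's elimination theorem: writing $U = V \oplus W$ for a chosen complement $V \cong U/W$, it uses the decomposition $L(V \oplus W) = L(V) \ltimes \la W \ra_{L(U)}$ and observes that killing the ideal $\la W \ra_{L(U)}$ leaves $L(V) \cong L(U/W)$. You instead build the isomorphism directly from the universal property of the free Lie algebra: the projection $U \to U/W$ induces $\alpha$ on the quotient because its kernel is an ideal containing $W$, the map $U/W \to L(U)/\la W\ra_{L(U)}$ induces $\beta$, and the two composites are identities because they fix generating subspaces (with the correct care that for $\beta\circ\alpha$ one argues on the generating image of $U$ rather than via the freeness of the quotient, which is not free). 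Each approach has its merits: the paper's proof is essentially immediate in context, since the semidirect product decomposition is already the engine of the whole paper and Lemma 1 sits right next to it; yours is self-contained, needs no choice of complement and no elimination theorem at all, and, as your parenthetical notes, is really the general statement that $L$, being a left adjoint, preserves cokernels---so it generalizes verbatim to free associative algebras, free groups, and other free constructions.
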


\begin{proof}
Write $U= V \oplus W$, so $V \cong U /W$. Then 
\begin{align*}
L(U)/ \la W \ra_{L(U)} & =  L(V \oplus W)/\la W \ra_{L(U)} = (L(V) \ltimes \la W
\ra_{L(U)})/\la W \ra_{L(U)} \\
&\cong L(V) \cong L(U/W).
\end{align*}
\end{proof}

Now, as in Lazard's Elimination Theorem, let $M = U(L(V))\cdot W \subseteq L(V \oplus W).$  
Then, as a corollary of that theorem we obtain:

\begin{lemma} \hfill
\begin{enumerate}
\item $U(L(V \oplus W)) = U(L(M))U(L(V))$
\item $U(L(V\oplus W)) = U(L(V)) + U(L(M)) M U(L(V))$.
\end{enumerate}
\end{lemma}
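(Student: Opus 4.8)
The plan is to deduce both statements from Lazard's Elimination Theorem together with the Poincar\'e--Birkhoff--Witt (PBW) theorem. By Lazard's theorem, $L(V \oplus W) = L(V) \ltimes L(M)$; in particular, as a vector space $L(V \oplus W) = L(M) \oplus L(V)$, where $L(M)$ is (the image of) the ideal $\la W \ra_{L(V \oplus W)}$ and $L(V)$ is a subalgebra. For (1) I would choose an ordered basis of $L(V \oplus W)$ consisting of a basis of $L(M)$ followed by a basis of $L(V)$. The PBW theorem then says the standard monomials in this basis form a basis of $U(L(V \oplus W))$; since each such monomial is a scalar times an element of $U(L(M))$ times an element of $U(L(V))$, we get $U(L(V \oplus W)) \subseteq U(L(M))U(L(V))$, and the reverse inclusion is clear. (The same argument in fact shows that the multiplication map $U(L(M)) \otimes U(L(V)) \to U(L(V \oplus W))$ is a linear isomorphism, but surjectivity is all we need.) Here I use the standard consequence of PBW that for a Lie subalgebra $\frak h \subseteq \frak g$ the natural map $U(\frak h) \to U(\frak g)$ is injective, so that $U(L(M))$ and $U(L(V))$ may be regarded as subalgebras of $U(L(V \oplus W))$.

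For (2) the key observation is that $U(L(M))$ is generated as a unital associative algebra by $M$, because $L(M)$, being the free Lie algebra on the vector space $M$, is generated as a Lie algebra by $M$. Consequently the augmentation ideal of $U(L(M))$ is spanned by the products $m_1 \cdots m_k$ with $m_i \in M$ and $k \ge 1$, and each such product lies in $U(L(M))\,M$; hence $U(L(M)) = \Phi\, 1 + U(L(M))\,M$. Multiplying on the right by $U(L(V))$ and using (1),
\[
U(L(V \oplus W)) = U(L(M)) U(L(V)) = \bigl(\Phi\,1 + U(L(M))\,M\bigr) U(L(V)) = U(L(V)) + U(L(M)) M U(L(V)),
\]
which is (2).

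As for the main obstacle: there is none of substance once Lazard's theorem is granted, since the content is entirely PBW bookkeeping. The one point deserving care is the identification of $U(L(M))$ with the unital subalgebra of $U(L(V \oplus W))$ generated by the subspace $M \subseteq L(V \oplus W)$, together with the description of its augmentation ideal as $U(L(M))\,M$; both are routine, but should be stated explicitly so that the factor ``$M$'' in (2) is unambiguous.
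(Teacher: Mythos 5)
Your proposal is correct and follows essentially the same route as the paper: part (1) is PBW applied to the decomposition $L(V \oplus W) = L(V) \ltimes L(M)$ from Lazard's theorem, and part (2) comes from the identity $U(L(M)) = \Phi + U(L(M))M$ (the paper writes this as $\Phi + U(L(M))L(M) = \Phi + U(L(M))M$, which is the same observation you make via the augmentation ideal) multiplied on the right by $U(L(V))$.
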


\begin{proof}
By the elimination theorem
$L(V \oplus W) = L(V) \ltimes L(M)$ 
(where we identify $\la W \ra_{L(V \oplus W)}$ 
with $L(M)$), so (1) follows by the
Poincar\'{e}-Birkhoff-Witt Theorem. Also, as $\Phi $ is the base field
\begin{align*}
U(L(M))&= \Phi + U(L(M))L(M)\\
&=\Phi + U(L(M))M
\end{align*} 
so (2) follows {}from (1).
\end{proof}

Now let $A \subset L(V) \subset L(V\oplus W)$, $B \subset M \subset L(M)
\subset L(V \oplus W)$. Thus, if $L(V \oplus W)$ is
graded by giving nonzero elements of $V$ degree zero and nonzero
elements of $W$ degree
one, then $A$ is an arbitrary subspace of elements of degree 
zero and $B$ is an arbitrary subspace of elements of degree one.
Write $\frak g = L(V)/ \la A \ra_{L(V)}$, 
$M_1 = \{ [M, \la A\ra_{L(V)}] + U(L(V))\cdot B \}$ and $N =
M/M_1$.

The following theorem, our main result, gives the structure of the quotient algebra 
$L(V \oplus W)/ \la A, B\ra_{L(V \oplus W)}$.

\begin{theorem} \hfill
\begin{enumerate}
\item The space $N$ is a $\frak g$-module.
\item $L(N)$ is isomorphic as a 
$\frak g$-module to the ideal of $L(V \oplus W)/ \la
A, B\ra_{L(V \oplus W)}$ generated by the image of $W$.
\item $L(V \oplus W)/ \la A, B\ra_{L(V \oplus W)} \cong \frak g
\ltimes L(N)$.
\end{enumerate}
\end{theorem}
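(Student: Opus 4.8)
The plan is to work inside $\frak L := L(V\oplus W)$, which by Lazard's Elimination Theorem decomposes as $\frak L = L(V)\ltimes L(M)$ with $L(M) = \la W\ra_{\frak L}$. I would grade $\frak L$ by putting $V$ in degree $0$ and $W$ in degree $1$; then $L(V)$ is the degree-zero part, $M$ is the degree-one part, and $L(M) = \bigoplus_{d\geq 1}\frak L_d$. The heart of the argument is to compute the ideal $J := \la A,B\ra_{\frak L}$ explicitly; once that is done, all three assertions fall out by passing to $\frak L/J$.

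For (1), I would first check that $M_1 = [M,\la A\ra_{L(V)}] + U(L(V))\cdot B$ is an $L(V)$-submodule of $M$ (using that $\la A\ra_{L(V)}$ is an ideal of $L(V)$ and that $M$ is an $L(V)$-submodule of $\frak L$) and that it contains $[\la A\ra_{L(V)},M]$; hence $\la A\ra_{L(V)}$ kills $N = M/M_1$, and the $L(V)$-module structure on $N$ descends to $\frak g = L(V)/\la A\ra_{L(V)}$. The key step is then the claim
\[
J \;=\; \la A\ra_{L(V)} \;+\; \la M_1\ra_{L(M)} .
\]
The inclusion ``$\supseteq$'' is easy: $\la A\ra_{L(V)} = U(L(V))\cdot A\subseteq J$; $M_1\subseteq J$ because its two summands lie in $U(\frak L)\cdot A$ and $U(\frak L)\cdot B$ respectively; and $\la M_1\ra_{L(M)} = U(L(M))\cdot M_1\subseteq U(\frak L)\cdot J = J$ since $J$ is an ideal. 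For ``$\subseteq$'' I would invoke Lemma 2(2) to write $J = U(\frak L)\cdot(A+B) = \big(U(L(V)) + U(L(M))\,M\,U(L(V))\big)\cdot(A+B)$, then note that $U(L(V))\cdot A = \la A\ra_{L(V)}$ and $U(L(V))\cdot B\subseteq M_1$ by the definition of $M_1$, that the adjoint action of $M$ carries $\la A\ra_{L(V)}$ into $[M,\la A\ra_{L(V)}]\subseteq M_1$ and carries $M_1\subseteq L(M)$ into $[L(M),\la M_1\ra_{L(M)}]\subseteq\la M_1\ra_{L(M)}$, and that $U(L(M))$ preserves the ideal $\la M_1\ra_{L(M)}$. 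Because the two summands of $J$ sit in the complementary graded subspaces $L(V) = \frak L_0$ and $L(M) = \bigoplus_{d\geq 1}\frak L_d$, this sum is direct and $J\cap L(M) = \la M_1\ra_{L(M)}$, $J\cap L(V) = \la A\ra_{L(V)}$.

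Given this, (2) and (3) are formal. The ideal of $\frak L/J$ generated by the image of $W$ is $(L(M)+J)/J\cong L(M)/(L(M)\cap J) = L(M)/\la M_1\ra_{L(M)}$, which by Lemma 1 (applied with $M$ in place of $U$ and $M_1$ in place of $W$) is isomorphic as a Lie algebra to $L(M/M_1) = L(N)$. The adjoint action of $L(V)$ on $L(M)$ preserves $M_1$, hence $\la M_1\ra_{L(M)}$, hence passes to the quotient with $\la A\ra_{L(V)}$ acting as zero; and since the isomorphism of Lemma 1 is the identity on the generating space $N = M/M_1$ and both sides carry compatible derivation actions of $L(V)$, it is $\frak g$-equivariant and restricts on $N$ to the module of part (1). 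That gives (2). For (3), the image of $L(V)$ in $\frak L/J$ is a subalgebra isomorphic to $L(V)/(L(V)\cap J) = \frak g$; together with the ideal $(L(M)+J)/J$ it spans $\frak L/J$, and (again by the grading) it meets that ideal in $(0)$, so $\frak L/J\cong\frak g\ltimes L(N)$.

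I expect the main obstacle to be the inclusion ``$\subseteq$'' in the key claim, i.e.\ showing that nothing outside $\la A\ra_{L(V)}+\la M_1\ra_{L(M)}$ is forced into $J$. This is exactly where Lemma 2(2) is needed, together with the bookkeeping that the adjoint action of $M$ on $\la A\ra_{L(V)}$ already lands in $M_1$ and that subsequent action by $U(L(M))$ cannot escape $\la M_1\ra_{L(M)}$. Everything else — the $L(V)$-stability of $M_1$, the grading arguments that make the sum direct, and the two applications of Lemma 1 — should be routine.
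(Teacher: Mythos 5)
Your proposal is correct and follows essentially the same route as the paper: the key identity $\la A,B\ra_{L(V\oplus W)}=\la A\ra_{L(V)}+\la M_1\ra_{L(M)}$ obtained from Lemma 2, then Lemma 1 and the Lazard decomposition $L(V)\ltimes L(M)$ to get (2) and (3), with (1) proved by the same submodule argument. Your extra remarks (directness of the sum via the grading, $\frak g$-equivariance of the Lemma 1 isomorphism) only make explicit what the paper leaves implicit.
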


\begin{proof}
To prove (1), note $M= U(L(V))\cdot W$ is an $L(V)$-module as is
$U(L(V))\cdot B$. Since $M$ and $\la A\ra_{L(V)}$ are $L(V)$-modules,
so is $[M, \la A \ra_{L(V)}]$. Thus $N$ is an $L(V)$-module. Since
$[\la A\ra_{L(V)}, M] \subset M_1$, $N$ is a $\frak g$-module.

We now prove (2):
\begin{align*}
\la A, B\ra_{L(V \oplus W)} &= \la A \ra_{L(V \oplus W)} + \la B
\ra_{L(V\oplus W)} \\
&= U(L(V \oplus W))\cdot A + U(L(V \oplus W))\cdot B.
\end{align*}

By Lemma 2 this is equal to
\begin{align*}
U(L(V))\cdot A &+ U(L(M))MU(L(V))\cdot A + U(L(M))U(L(V))\cdot B \\
&= \la A \ra_{L(V)} + \la [ M, \la A \ra_{L(V)}] \ra_{L(M)}
  + \la U(L(V))\cdot B\ra_{L(M)}\\
&= \la A \ra_{L(V)} + \la M_1 \ra_{L(M)}.
\label{E:*}\tag{*}\end{align*} 
Now the ideal of $L(V \oplus W)/ \la A, B \ra_{L(V \oplus W)}$
generated by $W$ is 
\begin{align*}
(L(M) 
&+  \la A, B \ra_{L(V \oplus W)})/ \la A, B \ra_{L(V \oplus W)}\\
&\cong L(M)/(L(M) \cap  \la A, B \ra_{L(V \oplus W)}).
\end{align*}
By equation (\ref{E:*}) this is equal to
 $L(M)/ \la M_1 \ra_{L(M)}$. By
Lemma 1 this is $L(M/M_1) = L(N)$.

Furthermore, $L(V \oplus W) / \la A, B \ra_{L(V \oplus W)} \cong (L(V)
\ltimes L(M))/\la A, B \ra_{L(V \oplus W)}$. By equation (*) this is
isomorphic to 
$$L(V)/ \la A \ra_{L(V)} \ltimes  L(M)/ \la M_1 \ra_{L(M)}
= \frak g \ltimes L(N),$$
so we have proven (3).
\end{proof}

\section{Applications}

 Let $I$ be an index set  which is finite or countably infinite and
let $R \subset I \times I$. 
 Let $\frak n$ be the Lie algebra with generators $X= \{x_i | i\in I\}$
and relations
$(\ad x_i)^{n_{ij} }x_j$ for $(i,j)\in R$. We may assume that $R$ does
not contain  
diagonal elements $(i,i)\in I$ because $[x_i,x_i]=0 $ in $L(X)$.

If $J\subset I$, let $\frak n_J$
denote the subalgebra of $\frak n$ generated by the $x_i$ for $i \in
J$. 
Theorem 1 gives:

\begin{theorem} Let $\frak n, I, R$ be as above. 
Suppose that for some choice of $S,T \subset I$, $I = S\cup T$
(disjoint union),  and $i,j \in T$ with $i \neq j$
implies $(i,j) \notin R$. 
Then $\frak n \cong \frak n_S \ltimes L(U(\frak n_S)\cdot W)$
where $W$ denotes the vector space spanned by the $x_i$ for $i \in T$.
\end{theorem}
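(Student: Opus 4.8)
The plan is to realize $\mathfrak{n}$ as a quotient of the kind handled by Theorem 1 and then read off the conclusion. First I would set $V$ to be the vector space spanned by $\{x_i \mid i \in S\}$ and $W$ the vector space spanned by $\{x_i \mid i \in T\}$, so that $L(V \oplus W) = L(X)$ with the grading in which elements of $V$ have degree zero and elements of $W$ have degree one. The defining relators $(\ad x_i)^{n_{ij}} x_j$ for $(i,j) \in R$ must be sorted by degree: a relator has degree zero exactly when $i, j \in S$, and it has degree one exactly when at least one of $i,j$ lies in $T$. The hypothesis that $i,j \in T$, $i \neq j$ implies $(i,j) \notin R$ is precisely what guarantees there are \emph{no} relators of degree $\geq 2$; every relator with $j \in T$ must have $i \in S$ (degree one), and every relator with $i \in T$ forces, after checking, that the bracket $(\ad x_i)^{n_{ij}}x_j$ is homogeneous of degree $n_{ij}$ in the $W$-variables — so I need $n_{ij} \le 1$ in that situation, i.e. the relator is either $0$ or $\pm[x_i,x_j]$ with $j \in S$, hence of degree one. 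This case analysis is the one genuinely fiddly point, and I will record it carefully.

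Having done that, let $A \subseteq L(V)$ be the span of the degree-zero relators (those with $i,j \in S$) and $B \subseteq M = U(L(V)) \cdot W$ be the span of the degree-one relators. Then by construction $\langle A, B\rangle_{L(V \oplus W)}$ is exactly the ideal defining $\mathfrak{n}$, so $\mathfrak{n} = L(V \oplus W)/\langle A, B\rangle_{L(V \oplus W)}$. Theorem 1(3) then gives $\mathfrak{n} \cong \mathfrak{g} \ltimes L(N)$ where $\mathfrak{g} = L(V)/\langle A\rangle_{L(V)}$ and $N = M/M_1$ with $M_1 = [M, \langle A\rangle_{L(V)}] + U(L(V))\cdot B$.

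It remains to identify $\mathfrak{g}$ with $\mathfrak{n}_S$ and $L(N)$ with $L(U(\mathfrak{n}_S)\cdot W)$. The first is immediate: $\mathfrak{g} = L(V)/\langle A\rangle_{L(V)}$ is the Lie algebra on $\{x_i \mid i \in S\}$ modulo exactly the relators $(\ad x_i)^{n_{ij}}x_j$ with $i,j \in S$, which is the definition of $\mathfrak{n}_S$ (one should note $\mathfrak{n}_S$ genuinely is the subalgebra generated by those $x_i$, using that the elimination theorem exhibits $L(V)$ as a subalgebra of $L(V\oplus W)$ and hence $\mathfrak{n}_S$ embeds in $\mathfrak{n}$). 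For the second, I want to show $N \cong U(\mathfrak{n}_S)\cdot W$ as $\mathfrak{g}$-modules. Now $U(\mathfrak{n}_S)\cdot W$ is the image of $M = U(L(V))\cdot W$ under the quotient map $\mathfrak{n} \to \mathfrak{n}$, i.e. $M/(M \cap \langle A,B\rangle_{L(V\oplus W)})$, and equation (*) in the proof of Theorem 1 computes $M \cap \langle A,B\rangle_{L(V\oplus W)}$ — intersected with $L(M)$ — to be $\langle M_1\rangle_{L(M)}$; intersecting further with $M$ recovers $M_1$ (since $M_1 \subseteq M$ and $M$ is the degree-one part of $L(M)$ in the relevant grading). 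Hence $U(\mathfrak{n}_S)\cdot W = M/M_1 = N$, and applying $L(-)$ finishes the proof. The main obstacle, as noted, is the purely combinatorial verification that the hypothesis on $S,T$ forces all relators into degrees $0$ and $1$; everything after that is a direct appeal to Theorem 1 together with bookkeeping of the identifications.
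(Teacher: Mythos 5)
You follow the same route as the paper: set $V=\mathrm{span}\{x_i\mid i\in S\}$, $W=\mathrm{span}\{x_i\mid i\in T\}$, put the degree-zero relators into $A$ and the degree-one relators into $B$, and invoke Theorem 1(3). The two identifications you spell out are correct and are genuinely more detailed than the paper, whose entire proof consists of naming $A=\{(\ad x_i)^{n_{ij}}x_j\mid i,j\in S\}$ and $B=\{(\ad x_i)^{n_{ij}}x_j\mid i\in S,\ j\in T\}$ and citing Theorem 1: your argument that $\mathfrak g\cong\mathfrak n_S$ (via the semidirect decomposition, so that the copy of $\mathfrak g$ inside $\mathfrak n$ is exactly the subalgebra generated by the $x_i$, $i\in S$), and your verification that $N=M/M_1$ is the image of $M$ in $\mathfrak n$, hence equals $U(\mathfrak n_S)\cdot W$ computed in $\mathfrak n$ --- by intersecting (*) with $L(M)$ and then with $M$, using that $M$ is the degree-one component of $L(M)$ --- is precisely the bookkeeping the paper compresses into its closing clause about how to ``write $N$''.

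The one genuine problem is at the point you yourself call fiddly. The hypothesis excludes only pairs with both indices in $T$; it says nothing about pairs $(i,j)\in R$ with $i\in T$, $j\in S$, and for such a pair the relator $(\ad x_i)^{n_{ij}}x_j$ has $W$-degree $n_{ij}$, which nothing in the statement bounds. Your conclusion that such a relator ``is either $0$ or $\pm[x_i,x_j]$'' cannot be checked from the hypothesis --- it is an extra assumption, and without it the theorem itself fails, not merely the proof: take $I=\{1,2\}$, $S=\{1\}$, $T=\{2\}$, $R=\{(2,1)\}$, $n_{21}=2$; then $[x_2,[x_1,x_2]]=0$ in $\mathfrak n$, a nontrivial relation among the elements $x_2$ and $(\ad x_1)x_2$ of $U(\mathfrak n_S)\cdot W$, so the ideal of $\mathfrak n$ generated by $x_2$ is not free on that module. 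The paper deals with this silently: its $A\cup B$ contains only relators with first index in $S$, i.e.\ it tacitly reads the presentation as having no relators with $i\in T$; in the generalized Kac--Moody application such relators, when they occur, are $[x_i,x_j]$ with $a_{ij}=0$, and symmetrizability gives $a_{ji}=0$, so they are scalar multiples of the relators $(\ad x_j)x_i$ already placed in $B$. To repair your write-up, either impose explicitly that every $(i,j)\in R$ has $i\in S$, or allow $i\in T$ only when $n_{ij}=1$ and adjoin $[x_i,x_j]=-(\ad x_j)\cdot x_i\in M$ to $B$ (after which your argument goes through verbatim); what you cannot do is derive this restriction from the stated hypothesis on $S$ and $T$.
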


\begin{proof}
Let $V$ be the vector space with basis $x_i , i \in S$. Take 
$A = \{ (\ad x_i)^{n_{ij}}~x_j | i, j \in S\}$ and 
$B = \{ (\ad x_i)^{n_{ij}}x_j | i \in S, j \in T\}$. Then Theorem 1 
gives the above decomposition, where we write $N$ as the $\frak n_S$-module in
$L(V \oplus W)/\langle A \rangle_{L(V \oplus W)}$ generated by $W$.
\end{proof}

Let $\frak l$ be a generalized Kac-Moody algebra associated to a symmetrizable 
matrix $(a_{ij})_{i,j\in I}$.
By Proposition 1.5 \cite{6} one has
$\frak l = \frak n^+ \oplus \frak h \oplus \frak n^-$.
Because the radical (the maximal graded ideal not intersecting 
$\frak h$) is zero (see \cite{6},\cite{7}), the subalgebras  
$\frak n^\pm$ can be
written as $\frak n$ above, choosing $X= \{x_i = e_i | i \in I\}$ 
for $\frak n^+$ and $X= \{x_i = f_i | i \in I\}$ for $\frak n^-$ where 
$e_i$ and $f_i$, $i \in I$ are the Chevalley generators of $\frak
l$. The Serre relations 
$(\ad x_i)^{n_{ij}}x_j=0$ occur whenever $a_{ii}>0$, or when 
$a_{ii}\leq 0$ and $a_{ij} =0$.
If we take $R$ to be the set corresponding to the occurrence of Serre
relations and $S, T$ as in Theorem 2, then applying Theorem 2 to both
$\frak n^+$ and $\frak n^-$ gives Theorem 3.19 of \cite{7}:

\begin{corollary}{Corollary 3}
Let $\frak l$ be a generalized Kac-Moody algebra associated to a symmetrizable 
matrix $(a_{ij})_{i,j\in I}$. Let $R$ denote the
set $\{(i,j)| a_{ii}>0 \text{ or } a_{ii}\leq 0 \text{ and } a_{ij} =0\}\subset
I \times I$. Choose $S,T$ so that
$I = S \cup T$ (disjoint union), and
$i,j \in T$ implies $(i,j) \notin R$.
Let $\frak l_{1}$ be the subalgebra of $\frak l$ 
generated by the $e_i$ and $f_i$ with $i \in S$. 
Then 
$\frak l = \frak u^+ \oplus (\frak l_{1}+ \frak h) \oplus \frak u^-$, 
where
$\frak u^-$ is the free Lie algebra 
on the direct sum of the standard highest weight
$\frak l_{1}$-modules
${\mathcal U}(\frak n^-_{S})\cdot f_j$ for $j \in T$ and 
$\frak u^+$ is the free Lie algebra on the direct sum of the
standard lowest weight $\frak l_{1}$-modules 
${\mathcal U}(\frak n^+_{S})\cdot e_j$ for $j \in T$.
\end{corollary}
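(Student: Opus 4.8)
The plan is to apply Theorem 2 to $\mathfrak{n}^{+}$ and to $\mathfrak{n}^{-}$ and then splice the two resulting decompositions into $\mathfrak{l} = \mathfrak{n}^{+} \oplus \mathfrak{h} \oplus \mathfrak{n}^{-}$. To begin I would record the presentations of $\mathfrak{n}^{\pm}$: with $X = \{x_i = e_i \mid i \in I\}$ (respectively $\{x_i = f_i \mid i \in I\}$) and $R$ as in the statement, the vanishing of the radical of $\mathfrak{l}$ (\cite{6}, \cite{7}) gives that $\mathfrak{n}^{+}$ (respectively $\mathfrak{n}^{-}$) is exactly the Lie algebra $\mathfrak{n}$ of Section 3 on these generators and relations, and the condition on $S,T$ is precisely the hypothesis of Theorem 2 in each case. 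Let $V^{\pm}$ be the span of the $e_i$, respectively $f_i$, with $i \in S$, and $W^{\pm}$ the span of the $e_j$, respectively $f_j$, with $j \in T$. Applying Theorem 2 with $A = \{(\ad x_i)^{n_{ij}} x_j \mid i, j \in S\}$ and $B = \{(\ad x_i)^{n_{ij}} x_j \mid i \in S,\ j \in T\}$ yields $\mathfrak{n}^{\pm} \cong \mathfrak{n}^{\pm}_S \ltimes L(N^{\pm})$, where $\mathfrak{n}^{\pm}_S$ is the subalgebra generated by the generators with index in $S$ and $N^{\pm}$ is the $\mathfrak{n}^{\pm}_S$-module generated by $W^{\pm}$ inside $L(V^{\pm} \oplus W^{\pm}) / \langle A \rangle$. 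Put $\mathfrak{u}^{\pm} = L(N^{\pm})$, an ideal of $\mathfrak{n}^{\pm}$ by Theorem 2.

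Next I would assemble the vector-space decomposition. The two instances of Theorem 2 give $\mathfrak{n}^{+} = \mathfrak{n}^{+}_S \oplus \mathfrak{u}^{+}$ and $\mathfrak{n}^{-} = \mathfrak{n}^{-}_S \oplus \mathfrak{u}^{-}$, so $\mathfrak{l} = \mathfrak{u}^{+} \oplus (\mathfrak{n}^{+}_S \oplus \mathfrak{h} \oplus \mathfrak{n}^{-}_S) \oplus \mathfrak{u}^{-}$, and it remains to identify the middle summand with $\mathfrak{l}_1 + \mathfrak{h}$. Here I would use that $\mathfrak{l}_1$ is itself triangular: restricting the root-space grading of $\mathfrak{l}$ to $\mathfrak{l}_1$ (which is generated by weight vectors, each occurring weight being a positive root, zero, or a negative root of $\mathfrak{l}$) gives $\mathfrak{l}_1 = (\mathfrak{l}_1 \cap \mathfrak{n}^{+}) \oplus (\mathfrak{l}_1 \cap \mathfrak{h}) \oplus (\mathfrak{l}_1 \cap \mathfrak{n}^{-})$, and a PBW normal-form argument for $\mathfrak{l}_1$ --- equivalently, the standard structure theory of the subalgebra of $\mathfrak{l}$ attached to the submatrix $(a_{ij})_{i,j \in S}$, using once more that the radical of $\mathfrak{l}$ vanishes --- identifies $\mathfrak{l}_1 \cap \mathfrak{n}^{\pm}$ with $\mathfrak{n}^{\pm}_S$ and $\mathfrak{l}_1 \cap \mathfrak{h}$ with $\sum_{i \in S} \Phi h_i$. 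Adjoining $\mathfrak{h}$ then gives $\mathfrak{l}_1 + \mathfrak{h} = \mathfrak{n}^{+}_S \oplus \mathfrak{h} \oplus \mathfrak{n}^{-}_S$ and hence $\mathfrak{l} = \mathfrak{u}^{+} \oplus (\mathfrak{l}_1 + \mathfrak{h}) \oplus \mathfrak{u}^{-}$.

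For the module-theoretic refinement I would treat $\mathfrak{u}^{-}$, the case of $\mathfrak{u}^{+}$ being symmetric with highest weight replaced by lowest. Unwinding the construction of $N^{-}$ in Theorem 2, using that each $f_j$ ($j \in T$) is a weight vector and that the module $M = U(L(V^{-})) \cdot W^{-}$ of Lazard's theorem is free on $W^{-}$, one obtains $N^{-} = \bigoplus_{j \in T} N^{-}_j$, where $N^{-}_j$ is $U(\mathfrak{n}^{-}_S) \cdot f_j$ modulo the relations $(\ad f_i)^{n_{ij}} f_j = 0$ for those $i \in S$ with $(i,j) \in R$. Since $[e_i, f_j] = 0$ whenever $i \in S$ and $j \in T$, the element $f_j$ is a highest weight vector for $\mathfrak{l}_1$; realizing $U(\mathfrak{n}^{-}_S) \cdot f_j$ inside $\mathfrak{l}$ one has $U(\mathfrak{n}^{-}_S) \cdot f_j = U(\mathfrak{l}_1) \cdot f_j$ by PBW, so $\mathfrak{u}^{-}$ is $\ad \mathfrak{l}_1$-stable and each $N^{-}_j$ is a cyclic highest weight $\mathfrak{l}_1$-module (alternatively: $\ad e_i$ is a derivation carrying $N^{-}$, hence $L(N^{-})$, into itself). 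Finally, the listed relations are exactly those cutting out the \emph{standard} highest weight $\mathfrak{l}_1$-module generated by a vector of the same weight as $f_j$ --- this is where the exponents $n_{ij}$ and the choice of $R$ enter --- and the vanishing of the radical of $\mathfrak{l}$ ensures that the realization inside $\mathfrak{l}$ imposes nothing more; thus $N^{-}_j \cong U(\mathfrak{n}^{-}_S) \cdot f_j$ is that standard module, $\mathfrak{u}^{-} = L\big(\bigoplus_{j \in T} U(\mathfrak{n}^{-}_S) \cdot f_j\big)$ as an $\mathfrak{l}_1$-module, and symmetrically $\mathfrak{u}^{+} = L\big(\bigoplus_{j \in T} U(\mathfrak{n}^{+}_S) \cdot e_j\big)$ with the lowest weight $\mathfrak{l}_1$-modules.

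The step I expect to be the main obstacle is this last identification of the $N^{\pm}$ with direct sums of standard modules: one must check that the quotient defining $N^{\pm}$ in Theorem 2 imposes no collapse beyond the expected Serre relations, that the cyclic $\mathfrak{n}^{\pm}_S$-modules assemble into a direct sum, and that the action extends to the claimed $\mathfrak{l}_1$-action producing precisely the standard modules. All three rest on the zero-radical property of $\mathfrak{l}$ and of its submatrix subalgebra $\mathfrak{l}_1$ --- exactly the point at which \cite{6} and \cite{7} are invoked --- while the remaining bookkeeping (weights, the exponents $n_{ij}$, freeness of $M$) is routine.
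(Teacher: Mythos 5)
Your proposal is correct and follows essentially the same route as the paper: use the triangular decomposition $\frak l = \frak n^+\oplus\frak h\oplus\frak n^-$, invoke the vanishing of the radical to present $\frak n^\pm$ by the Chevalley generators and Serre relations indexed by $R$, and apply Theorem 2 to each of $\frak n^+$ and $\frak n^-$. The extra detail you supply (identifying the middle term with $\frak l_1+\frak h$ and matching the modules $N^\pm_j$ with the standard highest/lowest weight $\frak l_1$-modules ${\mathcal U}(\frak n^\mp_S)\cdot f_j$, ${\mathcal U}(\frak n^\pm_S)\cdot e_j$ via the presentation of standard modules) is exactly the bookkeeping the paper leaves implicit by citing Theorem 3.19 of \cite{7}, and your account of it is sound.
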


Theorem 5.1 of \cite{6} is a special case of this Corollary. As noted in 
\cite{7} one can iterate this decomposition until $\frak l_1$ is a 
semi-simple or Kac-Moody subalgebra. The results of 
\cite{5} on free partially commutative Lie algebras may be obtained as the case 
where all $a_{ii}<0$. 

This decomposition is used in \cite{6} to obtain the denominator
identity for the algebra $\frak l$ {}from the identity for the
subalgebra $\frak l_1$. Conversely, one can prove Corollary 3 using the
denominator and character formulas for generalized Kac-Moody algebras.
(This is the proof in \cite{7}.) Other applications include computing
the homology of the Lie algebra over a standard module, and
determining a class of completely reducible modules \cite{8}.

%\begin{acknowledgment}
%\end{acknowledgment}

\end{document}